\definecolor{webgreen}{rgb}{0,.5,0}
\definecolor{webbrown}{rgb}{.6,0,0}
\begin{document}

\theoremstyle{plain}
\newtheorem{theorem}{Theorem}
\newtheorem{corollary}[theorem]{Corollary}
\newtheorem{lemma}[theorem]{Lemma}
\newtheorem{proposition}[theorem]{Proposition}

\theoremstyle{definition}
\newtheorem{definition}[theorem]{Definition}
\newtheorem{example}[theorem]{Example}
\newtheorem{conjecture}[theorem]{Conjecture}

\theoremstyle{remark}
\newtheorem{remark}[theorem]{Remark}

\begin{center}
\vskip 1cm{\LARGE\bf The Fibonacci Sequence and Schreier-Zeckendorf Sets
\vskip 1cm}
\large
H\`ung Vi\d{\^e}t Chu\\
Department of Mathematics\\ 
University of Illinois at Urbana-Champaign \\
Champaign, IL 61820\\
USA \\
\href{mailto:hungchu2@illinois.edu}{\tt hungchu2@illinois.edu} \\
\end{center}
\vskip .2 in

\begin{abstract}
A finite subset of the natural numbers is \textit{weak-Schreier} if $\min
S \ge |S|$, \textit{strong-Schreier} if $\min S>|S|$, and \textit{maximal}
if $\min S = |S|$. Let $M_n$ be the number of weak-Schreier sets with
$n$ being the largest element and $(F_n)_{n\geq -1}$ denote the
Fibonacci sequence. A finite set is said to be Zeckendorf if it does
not contain two consecutive natural numbers. Let $E_n$ be the number of
Zeckendorf subsets of $\{1,2,\ldots,n\}$. It is well-known that $E_n =
F_{n+2}$. In this paper, we first show four other ways to generate the
Fibonacci sequence from counting Schreier sets. For example, let $C_n$
be the number of weak-Schreier subsets of $\{1,2,\ldots,n\}$. Then $C_n =
F_{n+2}$. To understand why $C_n = E_n$, we provide a bijective mapping to
prove the equality directly. Next, we prove linear recurrence relations
among the number of Schreier-Zeckendorf sets. Lastly, we discover the
Fibonacci sequence by counting the number of subsets of $\{1,2,\ldots,
n\}$ such that two consecutive elements
in increasing order always differ by an
odd number.
\end{abstract}

\section{Background and main results}
Let the Fibonacci sequence be $F_{-1} = 1$, $F_0 = 0$, and $F_m = F_{m-1}+F_{m-2}$ for all $m\ge 1$. We only concern ourselves with finite subsets of natural numbers greater than $0$ and use $\mathbb{N}$ for the set $\{1,2,3,\ldots\}$. We define a set to be 
\begin{itemize}
\item $\textit{weak-Schreier}$ if  $\min S \ge |S|$,
\item $\textit{strong-Schreier}$ if  $\min S>|S|$ and
\item $\textit{maximal}$ if $\min S = |S|$,
\end{itemize}
where $|S|$ is the cardinality of set $S$. Schreier sets are named after Schreier who defined them to solve a problem in Banach space theory in 1930 \cite{Schreier}. These sets were also independently discovered in combinatorics and are connected to Ramsey-type theorems for subsets of $\mathbb{N}$. For each $n\in \mathbb{N}$, let $M_n$ be the number of weak-Schreier sets with $n$ being the largest element. In notation,
$$M_n = | \{ S \subseteq \mathbb{N}: \min S \geq |S|
\text{ and } \max S = n\} |.$$
The first few values of $M_n$ are $1,1,2, 3, 5, 8, 13, 21, 34, \ldots$; indeed, Bird showed that $M_n = F_{n}$ for all $n$ \cite{Elec}. If we look at either  strong-Schreier sets or maximal sets instead, we can also generate the Fibonacci sequence. Let 
\begin{itemize}
    \item $A_n$ be the number of strong-Schreier sets $S$ with $\max S = n$,
    \item $B_n$ be the number of maximal sets $S$ with $\max S = n$,
    \item $C_n$ be the number of weak-Schreier subsets of $\{1,2,\ldots,n\}$ (including the empty set),
    \item $D_n$ be the number of strong-Schreier subsets of $\{1,2,\ldots,n\}$ (including the empty set).
\end{itemize}
For our sequence $(C_n)_{n\geq 1}$ and $(D_n)_{n\geq 1}$, we relax the condition about the maximum of our sets. Clearly, for each $n\in\mathbb{N}$, $M_n = A_n + B_n$, $C_n = \sum_{k=1}^n M_k + 1$ and $D_n = \sum_{k=1}^n A_n+1$. 
\begin{theorem} \label{4ways} For each $n\in\mathbb{N}$, we have
$A_{n} = F_{n-1}$, $B_n = F_{n-2}$, $C_n = F_{n+2}$ and $D_n = F_{n+1}$
\end{theorem}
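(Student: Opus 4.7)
My plan is to derive all four identities from Bird's theorem $M_n = F_n$ together with one small shifting bijection and one standard partial-sum identity for the Fibonacci numbers.

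First I would prove $A_n = F_{n-1}$ by a direct bijection with the weak-Schreier sets of maximum $n-1$. Given a strong-Schreier set $S$ with $\max S = n$, the strict inequality $\min S > |S| \geq 1$ forces $\min S \geq 2$, so the shift $\varphi(S) := \{s-1 : s \in S\}$ lies in $\mathbb{N}$. It has the same cardinality as $S$, maximum $n-1$, and minimum $\min S - 1 \geq |S| = |\varphi(S)|$, so $\varphi(S)$ is weak-Schreier with $\max = n-1$. The inverse map $T \mapsto \{t+1 : t \in T\}$ sends a weak-Schreier set with max $n-1$ to a strong-Schreier set with max $n$ (the strict inequality is restored by the shift), so $\varphi$ is a bijection and $A_n = M_{n-1} = F_{n-1}$.

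Using the identity $M_n = A_n + B_n$ stated in the excerpt, I then immediately obtain
$$B_n \;=\; M_n - A_n \;=\; F_n - F_{n-1} \;=\; F_{n-2}.$$
For $C_n$ and $D_n$ I would invoke the well-known summation identity $\sum_{k=1}^{n} F_k = F_{n+2} - 1$, which is a one-line induction from the recurrence $F_{k+2} = F_{k+1} + F_k$. Substituting into the relations $C_n = 1 + \sum_{k=1}^n M_k$ and $D_n = 1 + \sum_{k=1}^n A_k$ gives
$$C_n \;=\; 1 + \sum_{k=1}^{n} F_k \;=\; F_{n+2}, \qquad D_n \;=\; 1 + \sum_{k=1}^{n} F_{k-1} \;=\; 1 + (F_{n+1}-1) \;=\; F_{n+1},$$
where the last reduction uses $F_0 = 0$ to rewrite $\sum_{k=1}^n F_{k-1} = \sum_{j=1}^{n-1} F_j = F_{n+1} - 1$.

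There is essentially no hard step here; the only point to be careful about is the boundary case $n=1$, where $A_1 = 0$ and we must confirm that no strong-Schreier set has maximum $1$ (true, since $\min S > |S| \geq 1$ rules out $S = \{1\}$), so that the shift bijection vacuously matches $M_0 = F_0 = 0$. Everything else is a direct consequence of Bird's theorem and the standard Fibonacci partial-sum identity.
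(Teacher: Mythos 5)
Your proposal is correct, but your proof of $A_n = F_{n-1}$ takes a genuinely different route from the paper's. The paper proves item (1) computationally: it derives the explicit formula $A_n = \sum_{k=1}^{n-1}\sum_{j=0}^{k-3}\binom{n-k-1}{j}+1$ by conditioning on the minimum element, checks base cases, and then verifies the Fibonacci recurrence $A_{n+2}=A_{n+1}+A_n$ by a somewhat delicate manipulation of the double binomial sums. You instead observe that the shift $S \mapsto \{s-1 : s\in S\}$ is a bijection from strong-Schreier sets with maximum $n$ to weak-Schreier sets with maximum $n-1$ (the strict inequality $\min S > |S|$ is exactly what survives as $\min \varphi(S) \ge |\varphi(S)|$ after the shift), so $A_n = M_{n-1} = F_{n-1}$ by Bird's theorem; your check of the boundary case $n=1$ is the right thing to worry about and is handled correctly. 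Your argument is shorter and more conceptual --- it explains \emph{why} $A_n$ is a shifted copy of $M_n$ rather than re-deriving the recurrence from scratch --- at the cost of leaning on Bird's theorem for item (1), whereas the paper's item (1) is self-contained (though the paper itself invokes Bird's theorem for items (2) and (3), so this is no real loss). The remaining three items in your proposal coincide with the paper's proof essentially verbatim: $B_n = M_n - A_n$, and the partial-sum identity $\sum_{k=1}^n F_k = F_{n+2}-1$ for $C_n$ and $D_n$.
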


The Fibonacci representation of natural numbers was first studied by Ostrowski \cite{Os} and Lekkerkerker \cite{Lek}.  In 1972, Zeckendorf proved that every positive integer can be uniquely written as a sum of non-consecutive Fibonacci numbers \cite{Z}. Since then, many papers have generalized this result and explored properties of the Zeckendorf decomposition: see \cite{burger, BBGILMT,BDEMMTW, DDKMMV, Ke, KKMW, Lek}. We instead focus on the important requirement for uniqueness of the Zeckendorf decomposition; that is, our set contains no two consecutive Fibonacci numbers. We give the same definition for natural numbers. 
\begin{definition}
A finite set of natural numbers is Zeckendorf if the set does not contain two consecutive natural numbers. 
\end{definition}
Let $E_n$ be the number of subsets of $\{1,2,\ldots,n\}$ that satisfy the Zeckendorf condition. It is well-known that $E_n = F_{n+2}$.

Two different ways of counting subsets of $\{1,2,\ldots,n\}$ give the same number; that is, $C_n = E_n$. To understand the connection, we construct a bijective mapping to show that $C_n = E_n$ directly. Our proof is independent of the fact that $C_n = E_n = F_{n+2}$ and thus, provides insight into the seemingly mysterious equality. 

\begin{theorem} \label{2mapping}
For each $n\in \mathbb{N}$,  $C_n = E_n$.
\end{theorem}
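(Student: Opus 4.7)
The plan is to construct an explicit size-preserving bijection $\phi$ between weak-Schreier subsets of $\{1,\ldots,n\}$ and Zeckendorf subsets of $\{1,\ldots,n\}$, bypassing any appeal to the common value $F_{n+2}$. The motivation: for each fixed size $k$, both families biject with $k$-subsets of $\{1,\ldots,n-k+1\}$, but via complementary ``shift'' and ``spread'' procedures. Composing these two identifications suggests the right formula directly.

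Concretely I would set $\phi(\emptyset)=\emptyset$, and for a nonempty weak-Schreier set $S=\{s_1<s_2<\cdots<s_k\}$ (so $s_1\ge k$) define
\[
\phi(S) \;=\; \{\,s_i-(k-i)\;:\;1\le i\le k\,\}.
\]
That is, pull the smallest element down by $k-1$, the next by $k-2$, and so on, leaving the largest element fixed.

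Next I would verify that $\phi(S)$ is Zeckendorf in $\{1,\ldots,n\}$. Writing $t_i:=s_i-(k-i)$, the hypothesis $s_1\ge k$ forces $t_1\ge 1$, while $t_k=s_k\le n$ preserves the upper bound. The gap computation
\[
t_{i+1}-t_i \;=\; (s_{i+1}-s_i)+1 \;\ge\; 2
\]
shows no two elements are consecutive, and in particular the $t_i$ are strictly increasing. For the inverse, given a Zeckendorf $T=\{t_1<\cdots<t_k\}\subseteq\{1,\ldots,n\}$ I would set $\psi(T)=\{t_i+(k-i):1\le i\le k\}$ and $\psi(\emptyset)=\emptyset$. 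The Zeckendorf gap keeps the shifted elements strictly increasing; the bound $t_1+(k-1)\ge k$ certifies weak-Schreier; and the maximum $t_k$ stays $\le n$. The identities $\psi\circ\phi=\mathrm{id}$ and $\phi\circ\psi=\mathrm{id}$ then follow by direct substitution.

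I do not expect a serious obstacle: once the map $s_i\mapsto s_i-(k-i)$ is written down, every check is one line of arithmetic. The only real task is discovering the right transformation, and this is forced by the requirement that a strict inequality $s_{i+1}>s_i$ must be converted into a gap of at least $2$ while the range $\{1,\ldots,n\}$ is preserved. As a bonus, $\phi$ preserves both cardinality and maximum element, yielding a stronger equidistribution than needed for Theorem~\ref{2mapping}.
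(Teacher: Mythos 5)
Your map $\phi(S)=\{s_i-(k-i)\}$ with inverse $\psi(T)=\{t_i+(k-i)\}$ is exactly the bijection used in the paper, and your verifications (the gap becomes at least $2$, the weak-Schreier bound $s_1\ge k$ keeps $t_1\ge 1$, and conversely $t_1+(k-1)\ge k$ certifies weak-Schreier) match the paper's proof step for step. The proposal is correct and takes essentially the same approach; the observation that $\phi$ also preserves cardinality and maximum is a nice extra but not needed.
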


Next, a natural question is about sequences formed by the number of sets that satisfy both the Schreier and the Zeckendorf conditions. In particular, we say that a set satisfies the $k$-Zeckendorf condition if two arbitrary numbers in the set are at least $k$ apart. We discover linear recurrence relations among the number of sets satisfying both the Schreier and the $k$-Zeckendorf conditions. 

For each $n\in\mathbb{N}$, let $H_{k,n}$ be the number of subsets of $\{1,2,\ldots, n\}$  that 
\begin{itemize}
\item[(1)] satisfy the $k$-Zeckendorf condition;
\item[(2)] contain $n$; and
\item[(3)] are weak-Schreier.
\end{itemize}
\begin{theorem} \label{2cons}
Fix $k\in \mathbb{N}_{\ge 2}$. We have
$$
    H_{k,n} \ =\
\begin{cases} 1, & \mbox{ if } 1\le n\le k+1;\\
H_{k,n-1} + H_{k,n-(k+1)}, & \mbox{ if } n>k+1. 
\end{cases}$$
\end{theorem}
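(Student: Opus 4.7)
The base case $1 \le n \le k+1$ is immediate: if $S$ were counted by $H_{k,n}$ with $|S| \ge 2$, then weak-Schreier gives $\min S \ge |S| \ge 2$, while the $k$-Zeckendorf condition forces $\min S \le \max S - (|S|-1)k \le n-k \le 1$, a contradiction. So $S = \{n\}$ is the only option and $H_{k,n} = 1$.

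For $n > k+1$, my plan is to prove the recurrence by a direct bijection. Let $\mathcal{S}_n$ denote the family of sets counted by $H_{k,n}$, and partition it according to the top of the set: class (A) consists of those $S$ with either $|S| = 1$ or second-largest element at most $n-k-1$, and class (B) consists of those $S$ whose second-largest element is exactly $n-k$. Since $k$-Zeckendorf forces the second-largest to be at most $n-k$, this is indeed a partition. I would then introduce the maps
\[
\phi_A(S) = (S \setminus \{n\}) \cup \{n-1\}, \qquad \phi_B(S) = \{s-1 : s \in S \setminus \{n\}\},
\]
and show that $\phi_A$ is a bijection from class (A) to $\mathcal{S}_{n-1}$ and $\phi_B$ is a bijection from class (B) to $\mathcal{S}_{n-(k+1)}$, which yields $H_{k,n} = H_{k,n-1} + H_{k,n-(k+1)}$.

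Verifying $\phi_A$: the new maximum is $n-1$; the gap from $n-1$ down to the next element is at least $k$ by the class (A) hypothesis; and weak-Schreier survives because both $\min$ and $|S|$ are unchanged when $|S|\ge 2$, while if $|S|=1$ the new minimum is $n-1 \ge 1$. The candidate inverse $T \mapsto (T \setminus \{n-1\}) \cup \{n\}$ automatically lands in class (A), because any $T \in \mathcal{S}_{n-1}$ has second-largest at most $(n-1) - k = n-k-1$. For $\phi_B$: subtracting $1$ preserves all gaps, the second-largest $n-k$ becomes the new maximum $n-k-1$, and weak-Schreier is preserved since $\min S \ge |S|$ rearranges to $\min S - 1 \ge |S| - 1$; the candidate inverse $T \mapsto \{t+1 : t \in T\} \cup \{n\}$ produces a set whose second-largest equals $n-k$, i.e., class (B).

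The main subtlety I expect is the boundary bookkeeping, particularly the $|S|=1$ branch of class (A) and the requirement that $\min S \ge 2$ so that $\phi_B$ stays inside $\mathbb{N}$. The latter is free: if $\min S = 1$, weak-Schreier forces $|S| = 1$, but then $S = \{1\}$ contradicts $\max S = n > k+1$. Once these edge cases are handled, everything else is routine verification that the two maps and their inverses preserve the defining conditions.
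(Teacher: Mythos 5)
Your proof is correct, but it takes a genuinely different route from the paper. The paper first derives the closed form $H_{k,n}=\sum_{\ell=1}^{\lfloor (n-1)/(k+1)\rfloor}\binom{n-k\ell-1}{\ell}+1$ via a stars-and-bars count of the gap sequence, and then verifies the recurrence by manipulating the binomial sums, with a two-case analysis on the floor functions $\lfloor\frac{n-1}{k+1}\rfloor$, $\lfloor\frac{n-2}{k+1}\rfloor$, $\lfloor\frac{n-k-2}{k+1}\rfloor$ (this requires a separate proposition about floors). You instead partition the sets counted by $H_{k,n}$ according to whether the second-largest element is at most $n-k-1$ or exactly $n-k$ (the $k$-Zeckendorf condition guarantees these are the only possibilities), and exhibit explicit bijections onto the families counted by $H_{k,n-1}$ and $H_{k,n-(k+1)}$; your edge-case checks (the singleton $\{n\}$ in class (A), and the observation that $\min S\ge 2$ whenever $\max S=n>k+1$ so that $\phi_B$ stays in $\mathbb{N}$) are exactly the points that need care, and they all go through. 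Your argument is more elementary and explains combinatorially why the Fibonacci-type recurrence appears; the paper's approach is heavier but produces the explicit binomial formula for $H_{k,n}$ as a byproduct, which your bijection does not.
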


Using the exact same argument as in the proof of Theorem \ref{2cons},
we can also deduce the following theorems regarding strong and maximal
Schreier sets. For each $n\in\mathbb{N}$, let $I_{k,n}$ be the number
of subsets of $\{1,2,\ldots, n\}$  that (1) satisfy the $k$-Zeckendorf
condition, (2) contain $n$, and (3) are strong-Schreier.

\begin{theorem}
Fix $k\in \mathbb{N}_{\ge 2}$. We have
$$
    I_{k,n} \ =\ \begin{cases} 0, & \mbox{ if } n = 1;\\
    1, & \mbox{ if } 2\le n\le k+2;\\
    I_{k,n-1} + I_{k,n-(k+1)}, & \mbox{ if } n>k+2. 
\end{cases}$$
\end{theorem}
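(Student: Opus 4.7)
The plan is to mirror the proof of Theorem \ref{2cons}, changing only the places where the strict inequality $\min S > |S|$ behaves differently from its weak-Schreier analogue. I would establish the recursion by partitioning $I_{k,n}$ into two classes according to whether $n-k$ belongs to $S$, and matching each class bijectively with one of the summands $I_{k,n-1}$ and $I_{k,n-k-1}$.

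First I would clear the base cases. For $n=1$ the only candidate is $\{1\}$, which violates $\min S > |S|$, so $I_{k,1}=0$. For $2 \le n \le k+2$, any set of size at least two would contain a second element $m \le n-k \le 2$, forcing $\min S \le 2$; but strong-Schreier demands $\min S \ge 3$, so the singleton $\{n\}$ is the unique admissible set, giving $I_{k,n}=1$.

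For $n > k+2$, I would treat the two classes as follows. If $n-k \notin S$, send $S$ to $(S \setminus \{n\}) \cup \{n-1\}$: the second-largest element of $S$ is at most $n-k-1$, so the $k$-Zeckendorf property survives; the minimum and cardinality are preserved (or, if $|S|=1$, the new minimum $n-1$ still exceeds $1$ because $n \ge k+3$); and the inverse simply swaps $n-1$ back for $n$. This yields a bijection onto $I_{k,n-1}$. If $n-k \in S$, send $S$ to $\{s-1 : s \in S \setminus \{n\}\}$, that is, discard $n$ and shift every remaining element down by one. The maximum drops to $n-k-1$, all gaps are preserved, and the inequality $\min S \ge |S|+1$ becomes $\min \ge |S|$ on a set of cardinality $|S|-1$, which is strictly larger. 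The obvious inverse (shift up and adjoin $n$) is easy to check, so this gives a bijection onto $I_{k,n-k-1}$, completing the recursion.

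The one spot that really requires care — and thus the main obstacle — is the verification that the Case 2 map carries the strict strong-Schreier inequality through correctly. A naive substitution $n-k \mapsto n-k-1$ (leaving the other elements untouched) would fail on two counts: it could violate $k$-Zeckendorf whenever the third-largest element of $S$ equals $n-2k$, and it would decrease the cardinality without compensating the minimum. The global shift by $-1$ is chosen precisely so that the "extra $+1$" in $\min S \ge |S|+1$ is exactly what survives to deliver $\min > $ cardinality in the image; aligning these inequalities is the entire difference between the strong-Schreier argument and the weak-Schreier one in Theorem \ref{2cons}.
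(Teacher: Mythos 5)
Your argument is correct, but it is genuinely different from the paper's. The paper proves the analogous recurrence for $H_{k,n}$ (and asserts the $I_{k,n}$ case follows by ``the exact same argument'') by first deriving a closed-form expression as a sum of binomial coefficients via the stars-and-bars count of Lemma \ref{numberofsols}, and then verifying the recurrence through binomial-coefficient manipulations together with the floor-function casework of Proposition \ref{obs}. You instead prove the recurrence bijectively: splitting the sets counted by $I_{k,n}$ according to whether $n-k$ is present, sending the first class to $I_{k,n-1}$ by replacing $n$ with $n-1$, and sending the second class to $I_{k,n-(k+1)}$ by deleting $n$ and shifting everything down by $1$. All the needed checks go through: in the first class the second-largest element is at most $n-k-1$, so the gap condition survives and the minimum and cardinality are unchanged; in the second class the global shift decreases both the minimum and the cardinality by $1$, so $\min S > |S|$ is preserved exactly, and your observation that the strict inequality is what makes the shifted minimum stay positive and above the new cardinality is the right point to isolate. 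Your route is more elementary and conceptually transparent --- it explains \emph{why} the recurrence holds and avoids the floor-function analysis entirely --- while the paper's route yields an explicit formula for $I_{k,n}$ as a by-product, which the bijection does not. The base cases are also handled correctly in your write-up.
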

For each $n\in\mathbb{N}$, let $J_{k,n}$ be the number of subsets of
$\{1,2,\ldots, n\}$  that
\begin{itemize}
\item[(1)] satisfy the $k$-Zeckendorf condition;
\item[(2)] contain $n$; and
\item[(3)] are maximal.
\end{itemize}

\begin{theorem}
Fix $k\in \mathbb{N}_{\ge 2}$. We have
$$
    J_{k,n} \ =\ \begin{cases}  1, & \mbox{ if } n =1;\\
    0, &  \mbox{ if } 2\le n\le k+1;\\
   1, &  \mbox{ if } k+1 < n\le 2k+2;\\
J_{k,n-1} + J_{k,n-(k+1)}, & \mbox{ if } n>2k+2. 
\end{cases}$$
\end{theorem}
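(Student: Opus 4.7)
The plan is to follow the same bijective template as Theorem~\ref{2cons} (which partitions a weak-Schreier $k$-Zeckendorf set $S$ with $\max S=n$ according to whether the second-largest element equals $n-k$ or is at most $n-k-1$), adapted to preserve the stronger constraint $\min S=|S|$. The base cases I would dispatch by directly analyzing the possible shapes of $S\in J_{k,n}$ with $|S|=m=\min S$: the case $n=1$ gives $S=\{1\}$; for $n\ge 2$ the singleton $\{n\}$ is not maximal, forcing $m\ge 2$, and then the $k$-Zeckendorf condition forces $n\ge m(k+1)-k$. This inequality rules out $n\le k+1$ entirely, and for $k+1<n\le 2k+2$ it pins $m=2$, so $S=\{2,n\}$ is the unique candidate.

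For the recurrence ($n>2k+2$), I would write $S=\{a_1<a_2<\cdots<a_m\}$ with $a_m=n$, $a_1=m\ge 2$. The $k$-Zeckendorf condition forces $a_{m-1}\le n-k$, and the proof splits on whether equality holds. In Case~A ($a_{m-1}\le n-k-1$) I would use the map $S\mapsto (S\setminus\{n\})\cup\{n-1\}$: the image is $k$-Zeckendorf because $n-1-a_{m-1}\ge k$, and maximal because the minimum and cardinality are both unchanged (note $m\le a_{m-1}<n-1$), so it lands in $J_{k,n-1}$. The inverse $T\mapsto (T\setminus\{n-1\})\cup\{n\}$ automatically lies in Case~A, since any $T\in J_{k,n-1}$ has second-largest at most $n-1-k$.

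Case~B ($a_{m-1}=n-k$) is where the argument must depart from the $H_{k,n}$ template, and I expect this to be the main obstacle. Simply removing $n$ would leave a set with min $m$ and cardinality $m-1$, which is strong-Schreier rather than maximal, so there is no direct bijection onto $J_{k,n-k}$. The remedy is the uniform shift $\phi(S)=\{a_1-1,a_2-1,\ldots,a_{m-1}-1\}$: the maximum becomes $n-k-1$, the minimum becomes $m-1$, the cardinality becomes $m-1$, maximality is restored, and all pairwise gaps---hence the $k$-Zeckendorf condition---are preserved. The inverse $T\mapsto\{t+1:t\in T\}\cup\{n\}$ lands in Case~B, giving a bijection between Case~B and $J_{k,n-k-1}$. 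Summing the two cases yields $J_{k,n}=J_{k,n-1}+J_{k,n-k-1}$ for $n>2k+2$, completing the proof.
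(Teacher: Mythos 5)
Your proof is correct, but it proceeds by a route genuinely different from the paper's. The paper handles $J_{k,n}$ by asserting that ``the exact same argument'' as for Theorem \ref{2cons} applies: one writes the set as $\{a_1,\ldots,a_\ell,n\}$, pins down $a_1$ exactly (maximality forces $a_1=\ell+1$ rather than $a_1\ge\ell+1$), counts the gap sequences by the stars-and-bars lemma to get a closed-form sum of binomial coefficients, and then verifies the recurrence by binomial identities together with a floor-function case analysis. You instead give a direct bijection: sets with second-largest element at most $n-k-1$ correspond to $J_{k,n-1}$ by sliding $n$ down to $n-1$, and sets with second-largest element exactly $n-k$ correspond to $J_{k,n-k-1}$ by deleting $n$ and shifting the remainder down by $1$. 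Your key observation --- that plain deletion breaks maximality (leaving $\min=m$ but $|S|=m-1$) and that the uniform shift repairs it while preserving all gaps --- is exactly the adaptation needed to make the combinatorial decomposition respect the condition $\min S=|S|$, and your analysis of the possible cardinalities via $n\ge m+(m-1)k$ correctly accounts for the three base-case regimes, including why the recurrence only starts at $n>2k+2$. What your approach buys is a proof with no binomial or floor-function manipulation and a transparent explanation of where each term of the recurrence comes from; what the paper's approach buys is an explicit closed-form formula for $J_{k,n}$ as a by-product and a single computation that is reused verbatim across the weak, strong, and maximal variants.
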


We give the following definition that is useful for the statement of our last result. 

\begin{definition}
Let $A = \{a_1,a_2,\ldots,a_k\}$ ($a_1<a_2<\cdots <a_k$) for some $k\in \mathbb{N}_{\ge 2}$. The difference set of $A$ is $\{a_2-a_1,a_3-a_2,\ldots,a_k-a_{k-1}\}$. The empty set and a set with exactly one element do not have a difference set. 
\end{definition}

We end with the following small result. 

\begin{theorem}\label{fibodd}
Fix $n\in \mathbb{N}$. The number of subsets of $\{1,2,\ldots,n\}$ 
\begin{enumerate}
\item that contain $n$ and whose difference sets contain only odd numbers is $F_{n+1}$,
\item whose difference sets contain only odd numbers (the empty set and sets with exactly one element vacuously satisfy this requirement) is $F_{n+3}-1$.
\end{enumerate}
\end{theorem}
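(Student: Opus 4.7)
The plan is to handle part (1) directly and then deduce part (2) by summation. Writing $a_n$ for the count in part (1), I will establish the base cases $a_1 = 1$ and $a_2 = 2$ by direct enumeration, and then prove the Fibonacci-type recurrence $a_n = a_{n-1} + a_{n-2}$ for $n \geq 3$; this yields $a_n = F_{n+1}$ immediately.

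To prove the recurrence I will partition the valid subsets $S$ of $\{1,\ldots,n\}$ with $\max S = n$ according to whether $n-1 \in S$. When $n - 1 \in S$, deleting $n$ yields a valid subset of $\{1,\ldots,n-1\}$ with maximum $n-1$; this correspondence is manifestly bijective and contributes $a_{n-1}$. When $n - 1 \notin S$, I plan to use the shift map $\phi(S) = (S \setminus \{n\}) \cup \{n - 2\}$ onto valid subsets of $\{1,\ldots,n-2\}$ with maximum $n-2$, with inverse $\psi(T) = (T \setminus \{n-2\}) \cup \{n\}$. The main obstacle is verifying that $\phi$ is well defined: one must check that once $n \in S$ and $n - 1 \notin S$, the element $n - 2$ also lies outside $S$ (otherwise the consecutive pair $n-2, n$ would force a difference of $2$, violating the odd-difference condition), so no collision occurs; and one must check that the newly created top difference $(n-2) - m = (n - m) - 2$ is still odd because $n - m$ was odd. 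These parity checks are short but essential, and the inverse verification is symmetric, so this case contributes $a_{n-2}$ and closes the recurrence.

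Finally, for part (2), letting $b_n$ denote its count, I will partition by maximum (with an extra $+1$ for the empty set) to obtain $b_n = 1 + \sum_{k=1}^{n} a_k = 1 + \sum_{k=1}^{n} F_{k+1}$; applying the standard telescoping identity $\sum_{k=1}^{m} F_k = F_{m+2} - 1$ then collapses this to $b_n = F_{n+3} - 1$, as desired.
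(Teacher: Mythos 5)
Your argument is correct, but it reaches the two-term recurrence by a genuinely different route from the paper. The paper deletes the maximum $n$ and classifies a valid set by the (odd) gap to its second-largest element, which yields $P_{k+1} = P_k + \sum_{i\ \mathrm{odd},\, i\ge 3} |O_{k+1-i}| + 1$; it then needs strong induction together with the classical identities for sums of alternate Fibonacci numbers ($F_2+F_4+\cdots+F_{k-1}=F_k-1$ and $F_3+F_5+\cdots+F_{k-1}=F_k-1$) to collapse the tail into $P_{k-1}$. You instead split on whether $n-1\in S$ and, in the second case, use the shift $S\mapsto (S\setminus\{n\})\cup\{n-2\}$. The parity checks you flag are exactly the ones that matter and they do go through: if $n-1\notin S$ then the top gap is odd and at least $2$, hence at least $3$, so $\max(S\setminus\{n\})\le n-3$ and no collision with $n-2$ occurs, and the new top gap $(n-m)-2$ remains odd and positive; the inverse verification is symmetric. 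This gives a purely bijective proof of $a_n=a_{n-1}+a_{n-2}$ with ordinary (indeed no strong) induction and no appeal to Fibonacci summation identities in part (1), which is arguably cleaner and more self-contained; the paper's decomposition is the more immediate one but pays for it with the auxiliary identities. Part (2) is handled identically in both arguments, by summing part (1) over the possible maxima and adding $1$ for the empty set.
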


\section{Proof of Theorem \ref{4ways}}
\begin{proof}[Proof of Theorem \ref{4ways}] We first prove item (1). Simple computation gives $A_1 = 0 = F_{0}$, $A_2 = 1 = F_{1}$, $A_3 = 1 = F_2$, $A_4 = 2= F_3$, and $A_5 = 3 = F_4$. It suffices to prove that $A_{n} + A_{n+1}= A_{n+2}$ for $n\ge 4$. Fix $n\ge 4$ and let us find a formula for $A_n$. The minimum number $k$ in our sets can take values from $1$ to $n$. For each value of $k$, there are $n-k-1$ numbers strictly between $k$ and $n$. Because our sets are strong-Schreier, they contain at most $k-3$ numbers out of these $n-k-1$ numbers. Hence, our formula for $A_n$ is 
$$A_n \ = \ \sum_{k=1}^{n-1}\sum_{j=0}^{k-3}\binom{n-k-1}{j}+1.$$
Note that the number $1$ in our formula accounts for the set $\{n\}$. It remains to show that $A_n + A_{n+1} = A_{n+2}$ or equivalently, $A_{n+2} - A_{n+1} = A_n$ for $n\ge 4$. We have

\begin{align*}
    A_{n+2} - A_{n+1} &\ =\ \sum_{k=1}^{n+1}\sum_{j=0}^{k-3}\binom{n-k+1}{j} - \sum_{k=1}^{n}\sum_{j=0}^{k-3}\binom{n-k}{j}\\
    &\ =\ \sum_{k=1}^{n}\sum_{j=0}^{k-3}\bigg(\binom{n-k+1}{j} - \binom{n-k}{j}\bigg) + \sum_{j=0}^{n-2}\binom{0}{j}\\
    &\ =\ \sum_{k=1}^n\sum_{j=1}^{k-3}\binom{n-k}{j-1} + 1.
\end{align*}
Therefore,
\begin{align*}
    A_{n+2}-A_{n+1}-A_n &\ =\  \sum_{k=1}^n\sum_{j=1}^{k-3}\binom{n-k}{j-1} - \sum_{k=1}^{n-1}\sum_{j=0}^{k-3}\binom{n-k-1}{j}\\
    &\ =\ \sum_{k=4}^n \sum_{j=1}^{k-3} \binom{n-k}{j-1} - \sum_{k=3}^{n-1} \sum_{j=0}^{k-3} \binom{n-k-1}{j} \ =\ 0.
\end{align*}
The last equality is because for each $4\le t\le n$, we have $\sum_{j=1}^{t-3} \binom{n-t}{j-1} = \sum_{j=0}^{(t-1)-3}\binom{n-(t-1)-1}{j}$. Hence, $A_{n+2} = A_{n+1}+A_n$ and we are done. 

Next, we prove item (2), which follows immediately from item (1). We know that 
$$B_n \ =\ M_n - A_n \ =\ F_n - F_{n-1} \ =\ F_{n-2}.$$

We prove item (3). Fix $n\ge 1$. We have
$$C_n = \sum_{k=1}^n M_k + 1 \ =\ \sum_{k=1}^n F_k +1 \ =\  (F_{n+2}-1)+1 \ =\ F_{n+2},$$
as desired. The number $1$ accounts for the empty set. The fact that $\sum_{k=1}^n F_k = F_{n+2}-1$ is due to Lucas \cite[p.\ 4]{Lucas}.

Similarly, we prove item (4). Fix $n\ge 1$. We have
$$D_n = \sum_{k=1}^n A_k + 1 \ =\ \sum_{k=1}^n F_{k-1}+1 \ =\ (F_{n+1}-1)+1 \ =\ F_{n+1}.$$

We complete our proof of Theorem \ref{4ways}.
\end{proof}
Let $L^{w}_n$ be the number of weak-Schreier sets as subsets of $\{1,2,\ldots,n\}$ with an even maximum. 
\begin{corollary}
For each $n\in\mathbb{N}$,  
    $$L^{w}_n \ =\ \begin{cases} F_{n}, & \mbox{ if } n \mbox{ is odd}; \\
     F_{n+1}, & \mbox{ if } n \mbox{ is even}.\end{cases}$$
\end{corollary}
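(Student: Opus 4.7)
The plan is to partition the weak-Schreier subsets of $\{1,\ldots,n\}$ by their largest element and invoke $M_k = F_k$, which is established during the proof of Theorem \ref{4ways}. Using the same convention that counts the empty set in $C_n$ (so the empty set, with no largest element, is absorbed separately), the formula to aim for is
$$L^w_n \;=\; 1 + \sum_{\substack{1 \le k \le n \\ k \text{ even}}} M_k \;=\; 1 + \sum_{\substack{1 \le k \le n \\ k \text{ even}}} F_k,$$
where the leading $+1$ accounts for $\emptyset$.

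Next I would invoke the classical identity $F_2 + F_4 + \cdots + F_{2m} = F_{2m+1} - 1$, which admits a one-line induction from the Fibonacci recurrence: the base $F_2 = 1 = F_3 - 1$ holds, and the inductive step uses $F_{2m+1} - 1 + F_{2m+2} = F_{2m+3} - 1$. Writing $n = 2m$ for even $n$ and $n = 2m+1$ for odd $n$, in both cases the largest even index at most $n$ is $2m$, so
$$L^w_n \;=\; 1 + (F_{2m+1} - 1) \;=\; F_{2m+1}.$$
This equals $F_{n+1}$ when $n = 2m$ and $F_n$ when $n = 2m+1$, matching the two cases of the corollary.

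There is no real obstacle; the corollary is an immediate consequence of $M_k = F_k$ from Theorem \ref{4ways} combined with a well-known Fibonacci summation identity. The only point meriting a line of comment is the bookkeeping for the empty set: it is absorbed into the leading $+1$, and this $+1$ precisely neutralizes the $-1$ in the classical identity to yield a pure Fibonacci number on the right-hand side.
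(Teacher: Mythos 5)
Your proposal is correct and follows the same route as the paper: both decompose $L^w_n$ as $1+\sum_{2\mid k,\,k\le n} M_k = 1+\sum_{2\mid k,\,k\le n} F_k$ and then apply the even-index Fibonacci summation identity, splitting into the cases $n$ even and $n$ odd. The only difference is cosmetic --- you spell out the identity $F_2+F_4+\cdots+F_{2m}=F_{2m+1}-1$ and its induction, whereas the paper uses it silently.
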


\begin{proof} We have
\begin{align*}
    L^{w}_n \ = \ \sum_{{1\le k\le n} \atop{ 2\mid k}} M_k + 1 \ =
    \ \sum_{{1\le k \le n} \atop{ 2 \mid k}} F_k + 1.
\end{align*}
The number $1$ accounts for the empty set. 

If $n$ is even, $$L^{w}_n \ =\ \sum_{{1\le k\le n} \atop {2 \mid k}} F_k + 1 \ =\ (F_{n+1}-1)+1 \ =\ F_{n+1}.$$

If $n$ is odd, $$L^{w}_n \ =\ \sum_{{1\le k \le n} \atop { 2\mid k}} F_k + 1 \ =\ (F_n-1)+1 = F_n.$$
\end{proof}
Let $L^{s}_n$ be the number of strong-Schreier sets as subsets of $\{1,2,\ldots,n\}$ with an odd maximum. 
\begin{corollary}
For each $n\in\mathbb{N}$, 
    $$L^{s}_n \ =\ \begin{cases} F_{n}, & \mbox{ if } n \mbox{ is odd}; \\
    F_{n-1}, & \mbox{ if } n \mbox{ is even}.\end{cases}$$ 
\end{corollary}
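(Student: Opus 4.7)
The plan is to mimic the proof of the previous corollary for $L^w_n$, replacing $M_k$ by $A_k$. First I would partition the sets counted by $L^s_n$ by their maximum element $k$: since $k$ must be odd and lie in $\{1,2,\ldots,n\}$, and since each strong-Schreier set with $\max S = k$ is counted by $A_k$, this gives
$$L^s_n \ =\ 1 + \sum_{\substack{1 \le k \le n \\ k \text{ odd}}} A_k,$$
where the $+1$ accounts for the empty set (which is vacuously strong-Schreier, consistent with the convention used in the definition of $D_n$ and in the proof of the corollary for $L^w_n$).

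Next, I would invoke Theorem \ref{4ways} to substitute $A_k = F_{k-1}$, and reindex via $k = 2j+1$, obtaining
$$L^s_n \ =\ 1 + \sum_{j=0}^{\lfloor (n-1)/2 \rfloor} F_{2j}.$$
Evaluating this requires the classical identity $F_0 + F_2 + \cdots + F_{2m} = F_{2m+1} - 1$, which can be proved by a short induction or by subtracting the odd-indexed companion $F_1 + F_3 + \cdots + F_{2m-1} = F_{2m}$ from Lucas's identity $\sum_{k=1}^{N} F_k = F_{N+2} - 1$ already quoted in the proof of Theorem \ref{4ways}.

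The last step is simply to split on the parity of $n$. When $n = 2m+1$ is odd, the sum runs up to $j = m$ and yields $F_{2m+1} - 1 = F_n - 1$, so $L^s_n = F_n$. When $n = 2m$ is even, the sum runs up to $j = m-1$ and yields $F_{2m-1} - 1 = F_{n-1} - 1$, so $L^s_n = F_{n-1}$. There is no real obstacle here: once $A_k = F_{k-1}$ is in hand from Theorem \ref{4ways}, the argument is entirely parallel to the proof of the $L^w_n$ corollary, the only care needed being correct tracking of the upper index of the sum under the two parity cases.
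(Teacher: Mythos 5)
Your proposal is correct and follows essentially the same route as the paper: decompose $L^s_n$ by the (odd) maximum into $1+\sum_{2\nmid k}A_k$, substitute $A_k=F_{k-1}$ from Theorem \ref{4ways}, and evaluate the resulting sum of even-indexed Fibonacci numbers by parity of $n$. The only difference is that you state the identity $F_0+F_2+\cdots+F_{2m}=F_{2m+1}-1$ explicitly, whereas the paper uses it silently.
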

\begin{proof} We have
\begin{align*}
    L^{s}_n \ = \ \sum_{{1\le k\le n} \atop {2\nmid k}} A_k + 1 \ =\ \sum_{1\le k \le n, 2\nmid k} F_{k-1} + 1.
\end{align*}

If $n$ is even, $$L^{s}_n \ =\ \sum_{{1\le k\le n} \atop {2\nmid k}} F_{k-1} + 1 \ =\ (F_{n-1}-1)+1 \ =\ F_{n-1}.$$

If $n$ is odd, $$L^{s}_n \ =\ \sum_{{1\le k \le n} \atop {2\nmid k}} F_{k-1} + 1 \ =\ (F_n-1)+1 = F_n.$$
\end{proof}

\section{Proof of Theorem \ref{2mapping} --- Explanation of the mysterious identity}
Recall that $C_n$ is the number of weak-Schreier sets as subsets of $\{1,2,\ldots,n\}$, while $E_n$ is the number of subsets of $\{1,2,\ldots,n\}$ that do not contain two consecutive numbers. At the first glance, $C_n$ and $E_n$ are little related, so it is surprising to see that $C_n = E_n$ for all $n\in \mathbb{N}$. 

For each $n\in \mathbb{N}$, let $X_n$ denote the set of weak-Schreier sets as subsets of $\{1,2,\ldots,n\}$ and let $Y_n$ denote the set of subsets of $\{1,2,\ldots,n\}$ that do not contain two consecutive numbers. In this section, we construct a bijective function $f: X_n\rightarrow Y_n$ to prove that $|X_n| = |Y_n|$. 

\begin{proof}[Proof of Theorem \ref{2mapping}]
Fix $n\in \mathbb{N}$. Let $A = \{a_1,a_2,\ldots,a_{k-1},a_k\}$ ($a_1<a_2<\cdots<a_k$) be a weak-Schreier subset of $\{1,2,\ldots,n\}$. Our mapping $f$ acts on $A$ as follows
$$f(A) = f(\{a_1,a_2,\ldots,a_{k-1},a_k\}) = \{a_1-(k-1), a_2-(k-2),\ldots, a_{k-1}-1, a_k\}.$$
Define $f(\emptyset) = \emptyset$. 
To show that $f$ is well-defined, we show that $\{a_1-(k-1),a_2-(k-2),\ldots, a_{k-1}-1,a_k\}$ is in $Y_n$. Because $A$ is weak-Schreier, $k\le a_1 < a_2< \cdots < a_k$. Hence, 
$$1\ \le\ a_1-(k-1) \ <\ a_2-(k-2) \ <\ \cdots \ <\ a_{k-1}-1 \ <\ a_k\ \le\ n.$$
Let $t_i = a_i - (k-i)$ for $1\le i\le k$. If $k = 1$, then $\{t_1\}$ is clearly in $Y_n$. If $k\ge 2$, then for each $2\le i\le k$, we have $$t_i-t_{i-1} \ =\ (a_i - (k-i)) - (a_{i-1}-(k-(i-1))) \ =\ (a_i - a_{i-1}) + 1 \ \ge\ 2.$$ Therefore, $\{t_1, t_2,\ldots, t_k\}\in Y_n$. So, $f$ is well-defined. 

Next, we prove that $f$ is injective. Suppose that $f(A) = f(B)$. Let $A = \{a_1,a_2,\ldots,a_k\}$ and $B = \{b_1,b_2,\ldots,b_k\}$, where $a_1 < a_2 <\cdots < a_k$ and $b_1 <b_2 < \cdots < b_k$. 
Because \begin{align*}a_1-(k-1) \ < \ a_2-(k-2) \ <\ \cdots \ < \ a_{k-1}-1 \ < \ a_k,\\
b_1-(k-1) \ < \ b_2-(k-2)\ < \ \cdots \ < \ b_{k-1}-1\ <\ b_k,\end{align*}
we know that $f(A) = f(B)$ implies $a_i - (k-i) = b_i - (k-i)$ for all $1\le i\le k$. Hence, $a_i = b_i$, which shows that $A =  B$. Therefore, $f$ is injective. 

Finally, we prove that $f$ is surjective. Let $C = \{c_1, c_2, \ldots, c_k\}\in Y_n$ be chosen, where $c_1<c_2<\cdots<c_k$. We claim that 
$$D \ =\ \{c_1+(k-1), c_2+(k-2), \ldots, c_{k-1}+1, c_k\}$$
satisfies $f(D) = C$ and $D\in X_n$. Because $C$ do not contain two consecutive numbers, we know that 
$$k\ \le\ c_1+(k-1) \ <\  c_2+(k-2) \ <\  \cdots \ <\  c_{k-1}+1 \ <\  c_k\ \le \ n.$$
Hence, $D\in X_n$.

We have shown that $f$ is both well-defined and bijective. Therefore, $|X| = |Y|$ or $C_n = E_n$, as desired. 
\end{proof}
\begin{remark}
We would like to discuss the motivation for the bijection $f$ used in the proof of Theorem \ref{2mapping}. Let $A$ be a Schreier set. The map $f$ serves to increase the gap between adjacent elements of $A$ by $1$, thus fulfilling the Zeckendorf condition that adjacent elements differ by at least 2. Furthermore, the weak-Schreier condition that $\min A\ge |A|$ ensures that the resulting set is in $\{1,2,\ldots, n\}$. 
\end{remark}

\section{Proof of Theorem \ref{2cons}}

Before we prove Theorem \ref{2cons}, we need a simple proposition.
\begin{proposition}\label{obs}
For $n,k\in\mathbb{Z}$, the following claims hold.
\begin{enumerate}
\item If $\big\lfloor \frac{n-2}{k+1}\big\rfloor = \big\lfloor \frac{n-k-2}{k+1}\big\rfloor$, then $\big\lfloor \frac{n-1}{k+1}\big\rfloor = \big\lfloor \frac{n-2}{k+1}\big\rfloor + 1$.
\item If $\big\lfloor \frac{n-2}{k+1}\big\rfloor > \big\lfloor \frac{n-k-2}{k+1}\big\rfloor$, then $\big\lfloor \frac{n-1}{k+1}\big\rfloor < \big\lfloor \frac{n-2}{k+1}\big\rfloor + 1$.
\item If $\big\lfloor \frac{n-k-2}{k+1}\big\rfloor = \big\lfloor \frac{n-2}{k+1}\big\rfloor$, then $\frac{n-k-2}{k+1}=\big\lfloor\frac{n-2}{k+1}\big\rfloor$.
\end{enumerate}
\end{proposition}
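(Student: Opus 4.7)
The plan is to reduce all three statements to a single use of the division algorithm. First I would write $n-2 = q(k+1) + r$ with the standard remainder $0 \le r \le k$, so that $\lfloor (n-2)/(k+1)\rfloor = q$. With this parametrization fixed, the other two floors in the proposition are expressible directly in terms of $q$ and $r$, and the three claims then reduce to a case check on whether $r = k$ or $r < k$.

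Next I would compute $\lfloor (n-1)/(k+1)\rfloor$ and $\lfloor (n-k-2)/(k+1)\rfloor$ explicitly. Since $n-1 = q(k+1) + (r+1)$, the first floor equals $q$ when $r < k$ and $q+1$ when $r = k$. For the second, $n-k-2 = q(k+1) + (r-k)$: if $r = k$ this equals $q(k+1)$ exactly (so the floor is $q$ and in fact divisibility holds), while if $r < k$ the standard-form rewrite $n-k-2 = (q-1)(k+1) + (r+1)$ with $1 \le r+1 \le k$ shows the floor is $q-1$.

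With those explicit formulas in hand, the three conclusions fall out immediately. The hypothesis of (1), namely $\lfloor (n-k-2)/(k+1)\rfloor = q$, forces $r = k$ by the previous paragraph; then $\lfloor (n-1)/(k+1)\rfloor = q+1 = \lfloor(n-2)/(k+1)\rfloor + 1$, as desired. The hypothesis of (2) instead forces $r < k$, and then $\lfloor(n-1)/(k+1)\rfloor = q < q+1 = \lfloor(n-2)/(k+1)\rfloor + 1$. Finally, the hypothesis of (3) again forces $r = k$, and in that case the division-algorithm calculation gave $(n-k-2)/(k+1) = q = \lfloor(n-2)/(k+1)\rfloor$ on the nose.

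I do not anticipate a real obstacle here, as the argument is essentially bookkeeping. The one step that needs mild care is the case $r < k$ in the computation of $\lfloor(n-k-2)/(k+1)\rfloor$, where $q(k+1) + (r-k)$ must be rewritten as $(q-1)(k+1) + (r+1)$ so that the remainder lies in the standard range $[0,k]$; once that borrow is handled correctly, all three parts of the proposition read off directly.
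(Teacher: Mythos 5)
Your proof is correct, but it takes a more unified route than the paper's. The paper handles claims (1) and (2) with the single observation that $\frac{n-k-2}{k+1}=\frac{n-1}{k+1}-1$, hence $\big\lfloor\frac{n-k-2}{k+1}\big\rfloor=\big\lfloor\frac{n-1}{k+1}\big\rfloor-1$ by $\lfloor x-1\rfloor=\lfloor x\rfloor-1$, and only invokes the division algorithm for claim (3), writing $n-k-2=(k+1)p+q$. You instead fix one parametrization $n-2=q(k+1)+r$ with $0\le r\le k$ at the outset, compute all three floors explicitly, and reduce every claim to the dichotomy $r=k$ versus $r<k$. Your version is slightly longer for (1) and (2) but more transparent: it makes visible that the hypotheses of (1) and (3) are both equivalent to $(k+1)\mid(n-1)$, which is exactly the boundary case the paper needs in its Case 2 of the proof of Theorem \ref{2cons}. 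One small caveat applies equally to both arguments: the normal form with remainder in $[0,k]$ tacitly assumes $k+1\ge 1$, even though the proposition is stated for all $k\in\mathbb{Z}$; since it is only ever applied with $k\ge 2$, nothing is lost.
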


\begin{proof}
We prove claim (1). We have $$\bigg\lfloor \frac{n-2}{k+1}\bigg\rfloor \ =\ \bigg\lfloor \frac{n-k-2}{k+1}\bigg\rfloor \ =\ \bigg\lfloor \frac{n-1}{k+1}-1\bigg\rfloor \ =\ \bigg\lfloor \frac{n-1}{k+1}\bigg\rfloor-1.$$ Therefore, 
$$\bigg\lfloor\frac{n-1}{k+1}\bigg\rfloor \ =\ \bigg\lfloor\frac{n-2}{k+1}\bigg\rfloor+1.$$

Next, we prove claim (2). We have $$\bigg\lfloor \frac{n-2}{k+1}\bigg\rfloor \ >\ \bigg\lfloor \frac{n-k-2}{k+1}\bigg\rfloor \ =\ \bigg\lfloor \frac{n-1}{k+1}-1\bigg\rfloor \ =\ \bigg\lfloor \frac{n-1}{k+1}\bigg\rfloor-1.$$ Therefore, 
$$\bigg\lfloor\frac{n-1}{k+1}\bigg\rfloor \ <\ \bigg\lfloor\frac{n-2}{k+1}\bigg\rfloor+1.$$

Lastly, we prove claim (3). Write $n-k-2 = (k+1)p + q$ for some $0\le q\le k$. Then $$\frac{n-2}{k+1} \ =\ \frac{(k+1)p+q+k}{k+1} \ =\ p+\frac{q+k}{k+1} \ =\ p+1+\frac{q-1}{k+1}.$$ If $q\ge 1$, then $\big\lfloor \frac{n-2}{k+1}\big\rfloor = p+1 > p = \big\lfloor \frac{n-k-2}{k+1} \big\rfloor$, a contradiction. So, $q = 0$, implying that $\frac{n-k-2}{k+1} = \big\lfloor \frac{n-2}{k+1}\big\rfloor$.
\end{proof}
The following lemma is from \cite[Lemma 2.1]{KKMW} by Kologl\v{u} et al.
\begin{lemma}\label{numberofsols}
The number of solutions to $y_1 + \cdots + y_p = n$ with $y_i\ge c_i$ (each $c_i$ a non-negative integer) is $\binom{n-(c_1+\cdots+c_p)+p-1}{p-1}$.
\end{lemma}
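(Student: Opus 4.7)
The plan is to reduce the counting problem to the standard \emph{stars and bars} identity by a simple affine substitution. Specifically, I will introduce new variables $z_i := y_i - c_i$ for each $1 \le i \le p$, so that the constraints $y_i \ge c_i$ become $z_i \ge 0$, and the original equation $y_1 + \cdots + y_p = n$ is transformed into
\[
z_1 + z_2 + \cdots + z_p \ =\ n - (c_1 + c_2 + \cdots + c_p).
\]
This substitution is clearly a bijection between the solution sets, since $y_i = z_i + c_i$ recovers the original tuple uniquely.

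Next, I would invoke the classical stars-and-bars formula: the number of non-negative integer solutions of $z_1 + \cdots + z_p = N$ equals $\binom{N+p-1}{p-1}$. The usual argument is to encode a solution as a sequence of $N$ stars separated into $p$ groups by $p-1$ bars, giving $\binom{N+p-1}{p-1}$ such arrangements. Setting $N = n - (c_1 + \cdots + c_p)$ yields the claimed count
\[
\binom{n - (c_1 + \cdots + c_p) + p - 1}{p-1}.
\]

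The only subtlety worth flagging is the edge case $n < c_1 + \cdots + c_p$: here $N$ is negative, the solution set is empty, and by the standard convention $\binom{m}{p-1} = 0$ for $m < p-1$, so the formula still records the correct answer. There is really no conceptual obstacle; the lemma is a one-line consequence of a change of variables followed by stars and bars, which is presumably why the authors cite it from \cite{KKMW} rather than reproving it. If anything, the "hard" part is simply making explicit that the shift $y_i \mapsto y_i - c_i$ is a bijection of solution sets, which is immediate.
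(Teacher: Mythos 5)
Your proof is correct: the substitution $z_i = y_i - c_i$ followed by stars and bars is the standard argument, and your remark about the degenerate case $n < c_1 + \cdots + c_p$ is a sensible precaution. The paper itself gives no proof of this lemma --- it simply cites it from \cite[Lemma 2.1]{KKMW} --- so there is nothing to compare against beyond noting that your argument is the canonical one and fills in exactly what the citation leaves implicit.
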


\begin{proof}[Proof of Theorem \ref{2cons}]
Fix $k\ge 2$. We now find a formula for $H_{k,n}$ for all $n\in \mathbb{N}$. Fix $1\le \ell\le n-1$. Suppose that the set $\{a_1,\ldots,a_\ell,n\}$ satisfies all of our requirements. (For $\ell = 0$, we have the set $\{n\}$.) In particular, 
\begin{enumerate}
    \item $a_1\ge \ell+1$,
    \item $d_i = a_{i+1}-a_i\ge k$ and $d_\ell = n-a_\ell \ge k$.
\end{enumerate}
Note that \begin{align}\label{mainequa}a_1+\sum_{i=1}^\ell d_i \ =\ n.\end{align}
By Lemma \ref{numberofsols}, the number of sets satisfying Equation \eqref{mainequa}  is
$$\binom{n-(\ell+1+k\ell)+(\ell+1)-1}{(\ell+1)-1}\ =\ \binom{n-k\ell-1}{\ell}.$$
Therefore, the number of sets containing $n$ that are $k$-Zeckendorf and weak-Schreier is 

\begin{align*}
    H_{k,n} \ =\ \sum_{\ell=1}^{\big\lfloor \frac{n-1}{k+1}\big\rfloor}\binom{n-k\ell-1}{\ell} + 1.
\end{align*}
The number $1$ accounts for the set $\{n\}$ and we only let $\ell$ run up to $\big\lfloor \frac{n-1}{k+1}\big\rfloor$ to make sure that $n-k\ell-1\ge \ell$. It can be easily verified that $H_{k,n} = 1$ for $1\le n\le k+1$ because $\big\lfloor \frac{n-1}{k+1}\big\rfloor=0$ for $1\le n\le k+1$. It suffices to show that for $n\ge k+2$, $H_{k,n} = H_{k,n-1} + H_{k,n-(k+1)}$.
Equivalently,
\begin{align}\label{first}
    \sum_{\ell=1}^{\big\lfloor \frac{n-1}{k+1}\big\rfloor}\binom{n-k\ell-1}{\ell} \ =\ \sum_{\ell=1}^{\big\lfloor \frac{n-2}{k+1}\big\rfloor}\binom{n-k\ell-2}{\ell} + \sum_{\ell=1}^{\big\lfloor \frac{n-(k+1)-1}{k+1}\big\rfloor}\binom{n-k\ell-1-(k+1)}{\ell}+1.
\end{align}
Equivalently, noting that the $+1$ term cancels with the $l = 1$ term in the left hand side summation
\begin{align}\label{close}
    &\sum_{\ell=2}^{\big\lfloor\frac{n-2}{k+1}\big\rfloor}\bigg(\binom{n-k\ell-1}{\ell}-\binom{n-k\ell-2}{\ell}\bigg) + \sum_{\big\lfloor\frac{n-2}{k+1}\big\rfloor+1}^{\big\lfloor \frac{n-1}{k+1}\big\rfloor}\binom{n-k\ell-1}{\ell}\\\nonumber
    &\ =\ \sum_{\ell=1}^{\big\lfloor \frac{n-k-2}{k+1}\big\rfloor}\binom{n-k(\ell+1)-2}{\ell}.
\end{align}
We can simplify Equation \eqref{close}  further by applying the binomial coefficient recurrence
\begin{align*}
    \sum_{\ell=2}^{\big\lfloor\frac{n-2}{k+1}\big\rfloor}\binom{n-k\ell-2}{\ell-1} + \sum_{\big\lfloor\frac{n-2}{k+1}\big\rfloor+1}^{\big\lfloor \frac{n-1}{k+1}\big\rfloor}\binom{n-k\ell-1}{\ell}\ =\ \sum_{\ell=1}^{\big\lfloor \frac{n-k-2}{k+1}\big\rfloor}\binom{n-k(\ell+1)-2}{\ell}.
\end{align*}
Reindexing $\ell$ in the first summation, we have
\begin{align*}
    \sum_{\ell=1}^{\big\lfloor\frac{n-2}{k+1}\big\rfloor-1}\binom{n-k(\ell+1)-2}{\ell} + \sum_{\big\lfloor\frac{n-2}{k+1}\big\rfloor+1}^{\big\lfloor \frac{n-1}{k+1}\big\rfloor}\binom{n-k\ell-1}{\ell}\ =\ \sum_{\ell=1}^{\big\lfloor \frac{n-k-2}{k+1}\big\rfloor}\binom{n-k(\ell+1)-2}{\ell}.
\end{align*}
Subtract the first summation from both sides to have
\begin{align}\label{last}
 \sum_{\big\lfloor\frac{n-2}{k+1}\big\rfloor+1}^{\big\lfloor \frac{n-1}{k+1}\big\rfloor}\binom{n-k\ell-1}{\ell}\ =\ \sum_{\ell=\big\lfloor\frac{n-2}{k+1}\big\rfloor}^{\big\lfloor \frac{n-k-2}{k+1}\big\rfloor}\binom{n-k(\ell+1)-2}{\ell}.
\end{align}
We now prove that Equation \eqref{last}  is correct, which implies that Equation \eqref{first} is correct. 

\bigskip

\noindent Case 1: $\big\lfloor\frac{n-k-2}{k+1}\big\rfloor<\big\lfloor\frac{n-2}{k+1}\big\rfloor$. Then $\big\lfloor\frac{n-2}{k+1}\big\rfloor+1 > \big\lfloor \frac{n-1}{k+1}\big\rfloor$ by Proposition \ref{obs}. Therefore, two sides of Equation \eqref{last}  are identically $0$. 

\bigskip

\noindent Case 2: $\big\lfloor\frac{n-k-2}{k+1}\big\rfloor = \big\lfloor\frac{n-2}{k+1}\big\rfloor$. Then $\big\lfloor\frac{n-2}{k+1}\big\rfloor+1 = \big\lfloor \frac{n-1}{k+1}\big\rfloor$ and $\frac{n-k-2}{k+1} = \big\lfloor\frac{n-2}{k+1}\big\rfloor$ by Proposition \ref{obs}. Therefore, the left side of Equation \eqref{last}  is $$\binom{n-k(\big\lfloor\frac{n-2}{k+1}\big\rfloor+1)-1}{\big\lfloor\frac{n-2}{k+1}\big\rfloor+1} \ =\ 1$$ because $\frac{n-k-2}{k+1} = \big\lfloor\frac{n-2}{k+1}\big\rfloor$. Similarly, the right side is also equal to $1$. 

\bigskip

In both cases, Equation \eqref{last}  is correct. This completes our proof. 
\end{proof}

\section{Proof of Theorem \ref{fibodd}---A new way to generate the Fibonacci sequence}
\begin{proof}[Proof of Theorem \ref{fibodd}]First, we prove item (1). Let $P_n$ be the number of subsets of $\{1,2,\ldots, n\}$ that contain $n$ and whose difference sets contain only odd numbers.

\bigskip

\textit{Base cases:} For $n = 1$, we have $\{1\}$ to be the only subset of $\{1\}$ that satisfies our requirement. So, $P_1 = 1 = F_{2}$. For $n = 2$, we have $\{2\}$ and $\{1,2\}$ to be the only two subsets of $\{1,2\}$ that satisfy our requirement. So, $P_2 = 2 = F_{3}$. 

\bigskip

\textit{Inductive hypothesis:} Suppose that there exists $k\ge 2$ such that for all $n\le k$, $P_n = F_{n+1}$. 
We show that $P_{k+1} = F_{k+2}$. Let $O_n$ denote the set of subsets of $\{1,2,\ldots,n\}$ that satisfy our requirement. Observe that unioning a set in $O_{n-1-2i}$ (for $i\ge 0$) with $n$ produces a set in $O_n$ and any set in $O_n$ is of the form of a set in $O_{n-1-2i}$ plus the element $n$. Therefore, $$P_{k+1}\ =\ |O_{k+1}| \ =\ \sum_{{1 \leq i \leq k} \atop {2 \nmid i}}
|O_{k+1-i}| + 1\ =\ P_{k} + \sum_{{3 \leq i \leq k} \atop {2 \nmid i}} |O_{k+1-i}| + 1.$$ The number $1$ accounts for the set $\{n\}$.
If $k$ is odd, 
\begin{align*}
\sum_{{3 \leq i \leq k} \atop {2 \nmid i}} |O_{k+1-i}| &\ =\ |O_1| + |O_3| + \cdots + |O_{k-2}|\\
&\ =\ |F_2| + |F_4| + \cdots + |F_{k-1}| \ =\  F_k - 1 \ =\ P_{k-1} - 1.
\end{align*}
If $k$ is even,
\begin{align*}
\sum_{{3 \leq i \leq k} \atop {2 \nmid i}} |O_{k+1-i}| &\ =\ |O_2| + |O_4| + \cdots + |O_{k-2}|\\
&\ =\ |F_3| + |F_5| + \cdots + |F_{k-1}| \ =\  F_k - 1 \ =\ P_{k-1}-1.
\end{align*}
In both cases, we have $\sum_{{3 \leq i \leq k} \atop {2 \nmid i}} |O_{k+1-i}| = P_{k-1} - 1$. Therefore, $P_{k+1} = P_k + P_{k-1}= F_{k+1}+F_{k}=F_{k+2}$, as desired.

Next, we prove item (2). Let $Q_n$ be the number of subsets of
$\{1,2,\ldots, n\}$ whose difference sets contain only odd numbers
is $Q_n$ (the empty set and sets with exactly one element vacuously
satisfy this requirement). Note that by definition of $P_n$ and $Q_n$,
we have $$Q_n \ =\ 1+ \sum_{k=1}^n |P_k| \ =\ 1+ \sum_{k=1}^n F_{k+1} \
=\ \sum_{k=1}^{n+1}F_k \ =\ F_{n+3} - 1,$$ as desired. (The $+1$ before
the first summation accounts for the empty set.)  
\end{proof}

\section{Acknowledgments}
The author would like to thank the anonymous referee and the editor for various helpful comments that clarify several points made in this paper. Thanks to Kevin Beanland at Washington and Lee University for useful comments on earlier drafts.

\bigskip
\hrule
\bigskip

\noindent 2010 {\it Mathematics Subject Classification}: 11B39.

\noindent \emph{Keywords:} Fibonacci sequence, linear recurrence, combinatorics, Schreier set.

\end{document}